\newtheoremstyle{normalstyle}  
  {}{}                         
  {\normalfont}               
  {}                          
  {\normalfont}               
  {.}                         
  { }                         
  {}                          
\theoremstyle{normalstyle}
\newtheorem{theorem}{\textit{Theorem}}
\newtheorem{proposition}{\textit{Proposition}}
\newtheorem{remark}{\textit{Remark}}
\newtheorem{example}{\textit{Example}}
\newtheorem{corollary}{\textit{Corollary}}
\begin{document}
\title{A Generalized Cram\'er–Rao Bound Using Information Geometry} 


\author{%
  \IEEEauthorblockN{Satyajit Dhadumia}
  \IEEEauthorblockA{Department of Mathematics\\
                    Indian Institute of Technology Palakkad\\
                    Kerala, India\\
                    Email: 212114001@smail.iitpkd.ac.in,\\ \hspace{.8cm}satya22dhadumia@gmail.com}
  \and
  \IEEEauthorblockN{M. Ashok Kumar}
  \IEEEauthorblockA{Department of Mathematics\\
                    Indian Institute of Technology Palakkad\\
                    Kerala, India\\
                    Email: ashokm@iitpkd.ac.in}
}

\maketitle


\begin{abstract}
In information geometry, statistical models are considered as differentiable manifolds, where each probability distribution represents a unique point on the manifold. A Riemannian metric can be systematically obtained from a divergence function using Eguchi's theory (1992); the well-known \textit{Fisher-Rao metric} is obtained from the Kullback-Leibler (KL) divergence. The geometric derivation of the classical Cram\'er-Rao Lower Bound (CRLB) by Amari and Nagaoka (2000) is based on this metric. In this paper, we study a Riemannian metric obtained by applying Eguchi's theory to the Basu-Harris-Hjort-Jones (BHHJ) divergence (1998) and derive a generalized Cram\'er–Rao bound using Amari-Nagaoka's approach. There are potential applications for this bound in robust estimation.
\end{abstract}
\section{Introduction}
\label{sec:introduction}
Information geometry is an active area of research that applies differential geometry to statistical models in an effort to address challenging problems in mathematical statistics. The foundations of it can be found in C. R. Rao's pioneering work in 1945 \cite{crrao1945} where he introduced the \textit{Fisher-Rao metric} as a Riemannian metric on the space of probability distributions. The modern development of the field owes much to Shun-ichi Amari, as his significant contributions have influenced the theoretical foundation of information geometry. 
The publication of the foundational book \textit{Methods of Information Geometry} in 2000 by Shun-ichi Amari and Hiroshi Nagaoka \cite{AmariHiroshi2000} marked a significant milestone, offering a comprehensive introduction to the field. Since then, the domain has grown substantially, with numerous books contributing to its expansion, including \cite{arwini2008information,amari2016information}, and \cite{ay2017information}, each addressing various aspects and applications of information geometry. We begin with a brief introduction to information geometry.

Let \( S \)  denote a family of probability distributions \( p_{\boldsymbol{\theta}} \) on a finite set $\mathcal{X}$, parametrized by \( \boldsymbol{\theta} = (\theta_1, \theta_2, \ldots, \theta_k) \in \Theta \), where \( \Theta \) is an open subset of \( \mathbb{R}^k \). We call $S$ a statistical model. In the information geometric framework, \( S \) is regarded as a \( k \)-dimensional manifold or statistical manifold with a coordinate map \( \psi : S \to \mathbb{R}^k \), defined by \( \psi(p_{\boldsymbol{\theta}}) = \boldsymbol{\theta} \). For example, the family of Bernoulli distributions 
$
S= \{Ber(\theta): \theta\in (0,1)\}
$
forms a 1-dimensional statistical manifold. The \textit{Fisher information matrix} at a point \( p_{\boldsymbol{\theta}}\in S \) is a \( k \times k \) matrix \( G (\boldsymbol{\theta})= [g_{ij}(\boldsymbol{\theta})] \), \( 1 \leq i, j \leq k \), with the \((i,j)^{\text{th}}\) entry given by  
\begin{align}
    \label{eqn:metric-component}
   g_{ij}(\boldsymbol{\theta}) = \mathbb{E}_{\boldsymbol{\theta}}\big[ \partial_i \log p_{\boldsymbol{\theta}}(X)\partial_j\log p_{\boldsymbol{\theta}}(X) \big], 
\end{align}
where $\partial_i \coloneqq \frac{\partial}{\partial \theta_i}$ and $\mathbb{E}_{\boldsymbol{\theta}}[\cdot]$ denotes the expectation with respect to $p_{\boldsymbol{\theta}}$. Here $g_{ij}(\boldsymbol{\theta})$ is finite for all $\boldsymbol{\theta}$ and all $i,j$, and $g_{ij}:\Theta \to \mathbb{R}$ is $C^\infty$ smooth \cite{fisher1922mathematical, AmariHiroshi2000}.

The matrix \( G (\boldsymbol{\theta})\) is symmetric and positive semi-definite. When \( G (\boldsymbol{\theta})\) is strictly positive definite symmetric for every point \( p_{\boldsymbol{\theta}} \in S \), it defines a unique Riemannian metric (inner product) on the tangent space \( T_{p_{\boldsymbol{\theta}}}(S) \): $g(\boldsymbol{\theta})\coloneqq \sum_{i,j=1}^k g_{ij}(\boldsymbol{\theta})x^i y^j$ for any two tangent vectors $X,\,Y\in T_{p_{\boldsymbol{\theta}}}(S)$ where $X=\sum_{i=1}^k x^i\partial_i,\,Y=\sum_{j=1}^k y^{j}\partial_j$ with $x^i,\,y^j\in \mathbb{R}$, thereby transforming \( S \) into a Riemannian manifold. This is often referred to as the \textit{Fisher-Rao metric} or \textit{Fisher information metric} \cite{crrao1945}, \cite{AmariHiroshi2000}. This metric can also be derived from the Kullback-Leibler (KL) divergence using Eguchi's theory \cite[Sec.~2]{Eguchi1992} (c.f.~\cite[Th.~2.6]{eguchi2022minimum}), which defines a unique Riemannian metric on a statistical manifold from a divergence function\footnote{A divergence is a non-negative function \(D\) defined on a manifold \(M\) such that \(D(p, q) = 0\) iff \(p = q\).}. Based on this metric, Amari and Nagaoka \cite[Sec.~2.5]{AmariHiroshi2000} re-derived the classical Cram\'er–Rao Lower Bound (CRLB) for finite support statistical models through the information geometric approach. This geometric framework connects statistical quantities with geometric notions.

Let $X_1,X_2, \dots , X_n$ be an i.i.d. sample generated according to a probability distribution $p_{\boldsymbol{\theta}}\in S$. Let $\textbf{X}=(X_1,X_2,\cdots,X_n)$ and $\textbf{x}=(x_1,x_2,\cdots,x_n)$ be a realization of $\textbf{X}$. The maximum likelihood estimate $\hat{\theta}_{MLE}(\textbf{x})$ is the value of the parameter $\boldsymbol{\theta}$ that maximizes the likelihood function $L_n({\boldsymbol{\theta}}; \textbf{x})\coloneqq\Pi_{i=1}^n p_{\boldsymbol{\theta}}(x_i)$, that is, $$\hat{\theta}_{MLE}(\textbf{x}) = \arg\max_{{\boldsymbol{\theta}} \in \Theta} L_n({\boldsymbol{\theta}}; \textbf{x}).$$ 

A necessary condition for the existence of the maximum likelihood estimator (MLE) $\hat{\theta}_{MLE}(\textbf{X})$ is given by 
\begin{align}
    \label{eqn: score equation}
 \sum_{i=1}^{n}s(\boldsymbol{\theta};x_i)=0.
\end{align}
Eq.~\eqref{eqn: score equation} is known as the score equation or the estimating equation, where $s(\boldsymbol{\theta};x)\coloneqq\nabla_{\boldsymbol{\boldsymbol{\theta}}}\log p_{\boldsymbol{\theta}}(x)$.

Let $\hat{p}(\cdot)$ denote the empirical probability distribution of the sample $X_{1},X_{2},\cdots,X_{n}.$
Then \eqref{eqn: score equation} can be re-written as $$\sum_{x}\hat{p}(x)s(\boldsymbol{\theta};x)=0.$$ 

Now let us suppose that  the sample $X_1,X_2, \dots , X_n$ is generated according to a mixture distribution defined by 
\begin{equation}
    \label{eqn:mixture distribution}
    p_{\epsilon,\boldsymbol{\theta}} = (1 - \epsilon)p_{\boldsymbol{\theta}} + \epsilon \delta,
\end{equation}
where \(p_{\boldsymbol{\theta}}\in S\) denotes the true underlying distribution, \(\delta\) denotes the distribution of the outliers and \(\epsilon\in [0,1)\) denotes the proportion of contamination. We assume \(\delta\) has the same support as $p_{\boldsymbol{\theta}}$.

Usual MLE would seek to fit the data to $p_{\epsilon,\boldsymbol{\theta}}$. Hence its estimates are potentially affected by the outliers. Robust estimation focuses on fitting the data to the true underlying distribution $p_{\boldsymbol{\theta}}$, thereby minimizing the influence of the outliers. To achieve this, the estimating equation must be modified to down-weight the effect of the outliers, allowing more reliable estimation.
Basu et al. \cite{BasuHarris1998} proposed the following estimating equation:
\begin{align}
\label{eqn:basu-estimating-eqn}
    \frac{1}{n}\sum_{i=1}^{n}p_{\boldsymbol{\theta}}(x_{i})^{\alpha}s(\boldsymbol{\theta};x_{i}) 
    =\mathbb{E}_{\boldsymbol{\theta}}[p_{\boldsymbol{\theta}}(X)^{\alpha}s(\boldsymbol{\theta};X)], \, \alpha> 0.
\end{align}
 The intuition behind this equation is that if $x$ is an outlier, then ${p_{\boldsymbol{\theta}}(x)}^{\alpha}$ will be small for large values of $\alpha$. \eqref{eqn:basu-estimating-eqn} serves as the first-order optimality condition for the following likelihood function, known as the BHHJ likelihood function  \cite{BasuHarris1998}:
\begin{align}
\label{eqn:basu-likelihood}
     l^{(\alpha)}_{n}&(\boldsymbol{\theta};\textbf{x})\nonumber\\
     &\coloneqq\frac{1}{n}\sum_{i=1}^{n}\frac{(1+\alpha){p_{\boldsymbol{\theta}}(x_{i})}^{\alpha}-1}{\alpha}-\sum_{x\in \mathcal{X}}p_{\boldsymbol{\theta}}(x)^{1+\alpha}.
\end{align}
As $\alpha \to 0,$ $l^{(\alpha)}_{n}(\boldsymbol{\theta};\textbf{x})$ corresponds to 
\begin{align*}
    l_n(\boldsymbol{\theta};\textbf{x})\coloneqq\frac{1}{n}\sum_{i=1}^{n}\log p_{\theta}(x_{i}),
\end{align*}
the usual log-likelihood function. Consequently, it is regarded as a robust alternative to $l_n(\boldsymbol{\theta};\textbf{x})$, with its maximizer serving as a robust estimator, called the BHHJ estimator \cite{BasuHarris1998}.

It is easy to see that maximizing \eqref{eqn:basu-likelihood} is equivalent to minimizing the following divergence function:
\begin{align}
\label{eqn:basu-divergence}
    &B^{(\alpha)}(\hat{p},p_{\boldsymbol{\theta}})\nonumber\\
    &=\sum_{x\in \mathcal{X}}\Big\{ \frac{{p_{\boldsymbol{\theta}}(x)}^{1+\alpha}}{1+\alpha}-\frac{\hat{p}(x)p_{\boldsymbol{\theta}}(x)^{\alpha}}{\alpha}+\frac{\hat{p}(x)^{1+\alpha}}{\alpha (1+\alpha)}\Big\}.
\end{align}
This is analogous to the fact that maximizing the usual likelihood function is same as minimizing the KL-divergence \cite [Lem.~3.1]{csiszar2004information}. Also, as $\alpha\to 0$, $B^{(\alpha)}(\hat{p},p_{\boldsymbol{\theta}})$ coincides with
\[
I(\hat{p},p_{\boldsymbol{\theta}})\coloneqq\sum_{x\in\mathcal{X}} \hat{p}(x)\log\frac{\hat{p}(x)}{p_{\boldsymbol{\theta}}(x)},
\]
the KL-divergence or relative entropy. $B^{(\alpha)}$ is known as the BHHJ divergence  \cite{BasuHarris1998}.

\begin{remark}
\label{rem:Bregman}
$B^{(\alpha)}$ belongs to the class of Bregman divergences $D_{\varphi}$ with $\varphi(t) = {(t^{1+\alpha}-t})/{\alpha (1+\alpha)}$ \cite[Sec.~2.1]{JonesHijot2001} (c.f.~\cite[Sec.~2]{mukherjee2019b}). 
\end{remark}
Rest of the paper is organized as follows. In Section \ref{sec:alpha-fisher-information}, we focus on the $\alpha$-Fisher information metric, derived from the BHHJ divergence using Eguchi's theory. We will explore the geometry induced by this metric on the space of all probability distributions and introduce a key theorem along with two corollaries. In section \ref{sec:alpha-CRLB}, we present our main result, a generalized Cram\'er–Rao bound linked to the $\alpha$-Fisher information metric. Furthermore, we will discuss its potential application in BHHJ estimation. We end the paper with a summary and concluding remarks in Section \ref{sec:concluding-remarks}. Related works in this direction include \cite{kumar2018information}, \cite{kumar2020cram}, \cite{AshokMishra2021} and \cite{mishra2020generalized}.

\section{The $\alpha$-Fisher information metric and geometry induced on a manifold}
\label{sec:alpha-fisher-information}
Let $\mathcal{P}\coloneqq\mathcal{P}(\mathcal{X})$ denote the space of all probability distributions (strictly positive) on $\mathcal{X}$ (the probability simplex). This can be regarded as a $|\mathcal{X}|-1$ dimensional statistical manifold. Let $ \boldsymbol{\theta} = (\theta_1, \dots, \theta_k)$ be a coordinate system of $\mathcal{P}$. By using Eguchi's theory, we can define a Riemannian metric $g^{(\alpha)}(\boldsymbol{\theta})$, referred to as the $\alpha$-Fisher information metric on $\mathcal{P}$, derived from the BHHJ divergence, characterized by the matrix $G^{(\alpha)}(\boldsymbol{\theta})=[g_{ij}^{(\alpha)}(\boldsymbol{\theta})]$ with
\begin{eqnarray}
\label{eqn:alpha-Fisher-information}
    g^{(\alpha)}_{ij}(\boldsymbol{\theta}) &\coloneqq& -B^{(\alpha)}(\partial_i|\partial_j)\nonumber\\
    &\coloneqq& -\frac{\partial}{\partial \theta_i} \frac{\partial}{\partial {\theta}_j'} B^{(\alpha)}(p_{\boldsymbol{\theta}},p_{\boldsymbol{\theta}'})\big|_{\boldsymbol{\theta}^{'}=\boldsymbol{\theta}}\nonumber\\
         &=& \frac{1}{\alpha}\frac{\partial}{\partial \theta_i} \frac{\partial}{\partial {\theta}_j'} \Big[\sum_{x\in \mathcal{X}}p_{\boldsymbol{\theta}}(x)p_{\boldsymbol{\theta}'}(x)^{\alpha}\Big]\Big|_{\boldsymbol{\theta}^{'}=\boldsymbol{\theta}}\nonumber\\
         &=& \frac{1}{\alpha}\sum_{x\in \mathcal{X}}\Big\{\frac{\partial}{\partial \theta_i} \frac{\partial}{\partial {\theta}_j'}\big[p_{\boldsymbol{\theta}}(x)p_{\boldsymbol{\theta}'}(x)^{\alpha}\big]\Big\}\Big|_{\boldsymbol{\theta}^{'}=\boldsymbol{\theta}}\nonumber\\
         &=& \sum_{x\in \mathcal{X}}\Big[p_{\boldsymbol{\theta}'}(x)^{\alpha-1}\frac{\partial}{\partial \theta_i}p_{\boldsymbol{\theta}}(x)\frac{\partial}{\partial {\theta}_j'}p_{\boldsymbol{\theta}'}(x)\Big]\Big|_{\boldsymbol{\theta}'=\boldsymbol{\theta}}\nonumber\\
         &=& \sum_{x\in \mathcal{X}}\Big[p_{\boldsymbol{\theta}}(x)^{{\alpha}-1}\frac{\partial}{\partial \theta_i} p_{\boldsymbol{\theta}}(x) \frac{\partial}{\partial {\theta}_j}p_{\boldsymbol{\theta}}(x)\Big]\nonumber\\
         &=& \mathbb{E}_{\boldsymbol{\theta}}\big[p_{\boldsymbol{\theta}}(X)^{\alpha}\partial_i\log p_{\boldsymbol{\theta}}(X) \partial_j \log p_{\boldsymbol{\theta}}(X)\big].
       \end{eqnarray}
       
We will follow the same setting as in \cite[Sec.~2.5]{AmariHiroshi2000}. Notice that $\mathcal{P}\subset \mathbb{R}^{\mathcal{X}}\coloneqq\{A\,|\, A:\mathcal{X}\to \mathbb{R}\}$. The tangent space $T_p(\mathcal{P})$ is identified with the linear space $\mathcal{A}_0
\coloneqq\{A\in \mathbb{R}^{\mathcal{X}}|\,\sum_x A(x)=0\}.$ A tangent vector $X\in T_{p}(\mathcal{P})$ is denoted by $X^{(m)}\in \mathcal{A}_{0}$, called the mixture representation or m-representation of $X$. We write $T_{p}^{(m)}\mathcal{(P)}\coloneqq\{X^{(m)}\,|\,X\in T_{p}(\mathcal{P})\}=\mathcal{A}_{0}.$ Through the mapping $p\mapsto \log p$, a tangent vector $X\in T_{p}(\mathcal{P}) $ can be represented by $X^{(e)}$, defined by $X^{(e)}(x)=X^{(m)}(x)/p(x) \text{ for } x\in \mathcal{X}$, called the exponential or e-representation of $X$. We have $$T_{p}^{(e)}(\mathcal{P})\coloneqq\{X^{(e)}\,|\,X\in T_{p}(\mathcal{P})\}=\{A\in \mathbb{R}^{\mathcal{X}}\,|\,\mathbb{E}_{p}{[A]}=0\}.$$ 

Now consider the embedding $p\mapsto {(p^{\alpha}-1)}/{\alpha}$. Under this, we define 
$X^{(\alpha)}(x)=p(x)^{\alpha}X^{(e)}(x) \text{ for } x\in \mathcal{X} \text{ with } X\in T_{p}(\mathcal{P}),$ $X^{(e)}\in T_{p}^{(e)}(\mathcal{P})$, called $\alpha$-representation of $X$. Let
\[
T_{p}^{(\alpha)}(\mathcal{P}) \coloneqq \big\{X^{(\alpha)} \,|\, X \in T_{p}(\mathcal{P})\big\}.
\]
We have the following proposition.

\begin{proposition}
\begin{align*}
T_{p}^{(\alpha)}(\mathcal{P}) = \big\{A \in \mathbb{R}^{\mathcal{X}}\,|\,\mathbb{E}_{p}\big[p(X)^{-\alpha}A\big] = 0\big\}.    
\end{align*}
\end{proposition}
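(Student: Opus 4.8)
The plan is to prove the identity by a double inclusion, exploiting the fact that the $\alpha$-representation is obtained from the $e$-representation through pointwise multiplication by $p(x)^{\alpha}$, which is a linear bijection of $\mathbb{R}^{\mathcal{X}}$ since $p$ is strictly positive on the finite set $\mathcal{X}$. The starting point is the already-established description $T_{p}^{(e)}(\mathcal{P}) = \{A \in \mathbb{R}^{\mathcal{X}} \mid \mathbb{E}_{p}[A] = 0\}$, which I will use as a black box.

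For the inclusion $T_{p}^{(\alpha)}(\mathcal{P}) \subseteq \{A \mid \mathbb{E}_{p}[p(X)^{-\alpha}A] = 0\}$, I would take an arbitrary element $X^{(\alpha)}$ with $X \in T_{p}(\mathcal{P})$, substitute the defining relation $X^{(\alpha)}(x) = p(x)^{\alpha}X^{(e)}(x)$ into the constraint, and observe that the factors $p(x)^{-\alpha}$ and $p(x)^{\alpha}$ cancel, so that the expression collapses to $\mathbb{E}_{p}[X^{(e)}] = 0$, which holds by the characterization of $T_{p}^{(e)}(\mathcal{P})$.

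For the reverse inclusion, given any $A$ with $\mathbb{E}_{p}[p(X)^{-\alpha}A] = 0$, I would set $B(x) := p(x)^{-\alpha}A(x)$ and note that the hypothesis says exactly $\mathbb{E}_{p}[B] = 0$, so $B$ lies in $T_{p}^{(e)}(\mathcal{P})$ and hence equals $X^{(e)}$ for some $X \in T_{p}(\mathcal{P})$. Then $X^{(\alpha)}(x) = p(x)^{\alpha}X^{(e)}(x) = p(x)^{\alpha}B(x) = A(x)$, exhibiting $A$ as an element of $T_{p}^{(\alpha)}(\mathcal{P})$ and closing the argument.

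There is no serious obstacle here; the whole computation amounts to the observation that multiplication by $p^{\alpha}$ conjugates the defining linear functional $A \mapsto \mathbb{E}_{p}[A]$ into $A \mapsto \mathbb{E}_{p}[p^{-\alpha}A]$. The only point requiring care is that strict positivity of $p$ guarantees $p(x)^{\alpha} \neq 0$ for every $x$, so the map is genuinely invertible and both constraint sets are hyperplanes of dimension $|\mathcal{X}| - 1$; indeed a dimension count could replace one of the two inclusions if one prefers a shorter write-up.
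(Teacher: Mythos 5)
Your proof is correct and follows essentially the same route as the paper's: both directions rest on the relation $X^{(\alpha)}(x)=p(x)^{\alpha}X^{(e)}(x)$ together with the characterization $T_{p}^{(e)}(\mathcal{P})=\{A\in\mathbb{R}^{\mathcal{X}}\,|\,\mathbb{E}_{p}[A]=0\}$, with the reverse inclusion obtained by recognizing $p^{-\alpha}A$ as an $e$-representation. The dimension-count alternative you mention is a fine observation but is not needed and is not used in the paper.
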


\begin{proof}
Since $X^{(\alpha)}(x)=p(x)^{\alpha}X^{(e)}(x),$ $p(x)^{-\alpha}X^{(\alpha)}(x)=X^{(e)}(x)$. Hence
\[
\mathbb{E}[p(X)^{-\alpha}X^{(\alpha)}]=\mathbb{E}[X^{(e)}] = 0.
\]
On the other hand, if $A\in \mathbb{R}^{\mathcal{X}}$ such that $\mathbb{E}[p(X)^{-\alpha}A]=0$ then $p^{-\alpha}A\in T_p^{(e)}(\mathcal{P})$. Hence $p^{-\alpha}A=X^{(e)}$ for some $X\in T_{p}(\mathcal{P}).$ This implies $A=p^{\alpha}X^{(e)}$ and hence $A\in T_p^{(\alpha)}(\mathcal{P})$.
\end{proof}

For the natural basis $\partial_i$ of a coordinate system $ \boldsymbol{\theta}$, we have $(\partial_i)_{\boldsymbol{\theta}}^{(m)}=\partial_i p_{\boldsymbol{\theta}}$, $(\partial_i)_{\boldsymbol{\theta}}^{(e)}=\partial_i \log p_{\boldsymbol{\theta}}$,\,$(\partial_i)_{\boldsymbol{\theta}}^{(\alpha)}=p_{\boldsymbol{\theta}}^{\alpha}\partial_i \log p_{\boldsymbol{\theta}}.$\\
With these notations, for any $X,Y\in T_{p}(\mathcal{P})$, we define the $\alpha$-Fisher information metric at a point $p$ on $\mathcal{P}$ as
\begin{align}
\label{eqn:alpha_riemannian_metric}
     g^{(\alpha)}(X,Y)\coloneqq\big\langle X,Y\big\rangle_{p}&\coloneq\sum_x X^{(m)}(x)Y^{(\alpha)}(x)\nonumber \\
     &=\mathbb{E}_{p}\big[X^{(e)}Y^{(\alpha)}\big].
\end{align}
Let $S$ be a submanifold of $\mathcal{P}$ equipped with the metric $g^{(\alpha)}$ as defined in \eqref{eqn:alpha_riemannian_metric}. Let $T_p^*(S)$ be the dual space (cotangent space) of the tangent space $T_p(S)$. For a tangent vector $X\in T_p({S})$, we define the corresponding cotangent vector $\omega_X\in T^*_p{(S)}$ by the mapping $\omega_X:Y\mapsto \big\langle X,Y\big\rangle_p$ for any $Y\in T_p(S),$ where inner product is induced by the $\alpha$-Fisher information metric $g^{(\alpha)}$. This correspondence $X\mapsto\omega_X$ establishes a linear isomorphism between $T_p(S)$ and $T_p^*(S)$. Consequently, the inner product and norm on $T_p^*(S)$ can be defined by 
$\big\langle\omega_X, \omega_Y \big\rangle_p = \big\langle X, Y \big\rangle_p$ and $ \| \omega_X \|_p = \| X \|_p,$
inherited from $T_p(S)$.
Now, for an arbitrary real smooth function $f$ defined on $S$, consider the differential $(df)_p\in T^*_p(S)$ that maps $X$ to $X(f)$. The gradient $(\text{grad} f)_p\in T_p(S)$, corresponding to $(df)_p$, satisfies the following properties:
\begin{align}
\label{eqn:differential 2-form}
    &\big\langle(\text{grad} f)_p,X\rangle_p=X(f)=(df)_p(X), \nonumber \\
    &(\text{grad}f)_p=\sum_{i,j}(g^{ij}(\boldsymbol{\theta}))^{(\alpha)}\partial_j(f)\partial_i,\nonumber\\ 
    &\|(df)_p\|_p^2=\|(\text{grad}f)_p\|^2_p=\sum_{i,j}(g^{ij}(\boldsymbol{\theta}))^{(\alpha)}\partial_j(f)\partial_i(f),
\end{align} where $(g^{ij}(\boldsymbol{\theta}))^{(\alpha)}$ is the $(i,j)^{\text{th}}$ entry of the inverse of $G^{(\alpha)}(\boldsymbol{\theta}).$

The following theorem relates the variance of a random variable $A$, computed with respect to the escort distribution
\begin{equation}
    \label{eqn:escort}
p_{\alpha}(x)\coloneqq \frac{p(x)^{1-\alpha}}{\sum_{y\in\mathcal{X}} p(y)^{1-\alpha}},\quad x\in\mathcal{X},
\end{equation}
to the geometric quantity $\| (d\mathbb{E}[A])_p \|_p^2$, a second-order differential form.
\begin{theorem}
    For $A:\mathcal{X}\to \mathbb{R}$, let $\mathbb{E}[A]$ denote the mapping $p\mapsto \mathbb{E}_{p}[A]$ on $\mathcal{P}$. Then
\begin{align*}
    V_{\alpha}[A]=\frac{1}{\sum_{y}p(y)^{1-\alpha}}\|(d\mathbb{E}[A])_{p}\|_{p}^{2},
\end{align*}
where $V_{\alpha}[A]\coloneqq V_{p_{\alpha}}[A]\coloneq \mathbb{E}_{\alpha}[(A-\mathbb{E}_{\alpha}[A])^2]$ with $\mathbb{E}_{\alpha}[A]\coloneq \mathbb{E}_{p_{\alpha}}[A]$ and the norm $ \| \cdot \|_{p} $ is induced by the $\alpha$-Fisher information metric.
\end{theorem}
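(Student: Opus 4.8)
The plan is to exploit the linear isometry between $T_p(\mathcal{P})$ and $T_p^*(\mathcal{P})$ established in the excerpt, which gives $\|(d\mathbb{E}[A])_p\|_p^2=\|(\mathrm{grad}\,\mathbb{E}[A])_p\|_p^2$, and then to identify the gradient of the function $\mathbb{E}[A]\colon p\mapsto\mathbb{E}_p[A]$ explicitly in the $e$-representation. The escort distribution should emerge on its own as the centering that forces the gradient to be a legitimate tangent vector, rather than being imposed by hand.

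First I would compute the differential. Since $\mathbb{E}[A](p)=\sum_x p(x)A(x)$, differentiating along any $Y\in T_p(\mathcal{P})$ gives $(d\mathbb{E}[A])_p(Y)=\sum_x Y^{(m)}(x)A(x)=\mathbb{E}_p[Y^{(e)}A]$, using $Y^{(m)}=p\,Y^{(e)}$. Recalling that the $\alpha$-Fisher metric can be written as $\langle X,Y\rangle_p=\mathbb{E}_p[X^{(e)}Y^{(\alpha)}]=\sum_x p(x)^{1+\alpha}X^{(e)}(x)Y^{(e)}(x)$ (because $Y^{(\alpha)}=p^{\alpha}Y^{(e)}$), the gradient $Z\coloneqq(\mathrm{grad}\,\mathbb{E}[A])_p$ is the unique tangent vector with $\langle Z,Y\rangle_p=(d\mathbb{E}[A])_p(Y)$ for every $Y$.

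The central step is to verify the ansatz $Z^{(e)}(x)=p(x)^{-\alpha}\big(A(x)-\mathbb{E}_\alpha[A]\big)$. Substituting it into the metric collapses the weight $p^{1+\alpha}\cdot p^{-\alpha}=p$, and since $\mathbb{E}_p[Y^{(e)}]=0$ for every tangent vector, the additive constant $\mathbb{E}_\alpha[A]$ drops out and one recovers exactly $\mathbb{E}_p[Y^{(e)}A]$. What pins the constant down to the escort mean is the tangency requirement $\mathbb{E}_p[Z^{(e)}]=0$ defining $T_p^{(e)}(\mathcal{P})$: the condition $\sum_x p(x)^{1-\alpha}\big(A(x)-c\big)=0$ forces $c=\mathbb{E}_\alpha[A]$ precisely because $p_\alpha\propto p^{1-\alpha}$. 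I expect this to be the main (though mild) obstacle — recognizing that the escort expectation is not an ad hoc insertion but the unique shift making $Z^{(e)}$ lie in the $e$-tangent space.

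Finally I would evaluate the norm directly: $\|Z\|_p^2=\sum_x p(x)^{1+\alpha}\big(Z^{(e)}(x)\big)^2=\sum_x p(x)^{1-\alpha}\big(A(x)-\mathbb{E}_\alpha[A]\big)^2$. Factoring out $\sum_y p(y)^{1-\alpha}$ turns the remaining sum into $\sum_x p_\alpha(x)\big(A(x)-\mathbb{E}_\alpha[A]\big)^2=V_\alpha[A]$, so that $\|(d\mathbb{E}[A])_p\|_p^2=\big(\sum_y p(y)^{1-\alpha}\big)\,V_\alpha[A]$, which rearranges to the stated identity. A coordinate-based route using the given formula $\|(df)_p\|_p^2=\sum_{i,j}(g^{ij})^{(\alpha)}\partial_j f\,\partial_i f$ is in principle available, but it would require inverting $G^{(\alpha)}(\boldsymbol{\theta})$ on the $(|\mathcal{X}|-1)$-dimensional simplex and is considerably messier, so I would keep the representation-based argument.
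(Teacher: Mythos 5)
Your proof is correct and takes essentially the same route as the paper's: both identify $\mathrm{grad}\,\mathbb{E}[A]$ as the tangent vector whose $\alpha$-representation is $A-\mathbb{E}_{\alpha}[A]$, with the escort mean forced by the tangency/centering condition (the paper verifies $\mathbb{E}_{p}\big[p(X)^{-\alpha}\big(A-\mathbb{E}_{\alpha}[A]\big)\big]=0$, exactly your observation that $c=\mathbb{E}_{\alpha}[A]$ is the unique admissible shift), and then compute the squared norm as $\sum_{x}p(x)^{1-\alpha}\big(A(x)-\mathbb{E}_{\alpha}[A]\big)^{2}=\big(\sum_{y}p(y)^{1-\alpha}\big)V_{\alpha}[A]$. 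The only difference is presentational: the paper derives the representation forward from $X\big(\mathbb{E}[A]\big)$ via an add-zero manipulation and its characterization of $T_{p}^{(\alpha)}(\mathcal{P})$, whereas you posit the gradient ansatz and verify it; the substance is identical.
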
 
\begin{proof} For every $X\in T_{p}(\mathcal{P})$, we have
\[
X\big(\mathbb{E}[A]\big) = \big\langle\text{grad}\,\mathbb{E}[A], X\big\rangle_{p}.
\]
We observe that
\begin{align*}
     X\big(\mathbb{E}[A]\big)&=X\big(\sum_{x}A(x)p_{\theta}(x)\big)\\
                       &=\sum_{x}X^{(m)}(x)A(x).
\end{align*} 
Since $X^{(e)}(x)=X^{(m)}(x)/p(x) \text{ for } x\in \mathcal{X}$, we can write
$$X\big(\mathbb{E}[A]\big)=\mathbb{E}_{p}\big[ A X^{(e)}\big].$$
Since $\mathbb{E}_{p}[X^{(e)}]=0$, we can write
\begin{align*}
\label{eqn:tgt_vector_to_expectation}
     X\big(\mathbb{E}[A]\big)&=\mathbb{E}_{p}\Big[A X^{(e)}-\frac{\mathbb{E}_{p}[X^{(e)}]}{\sum_{y}p(y)^{1-\alpha}}\mathbb{E}_{p}[p(X)^{-\alpha}A]\Big]\\
                       &= \mathbb{E}_{p}\Big[AX^{(e)}-\frac{X^{(e)}}{\sum_{y}p(y)^{1-\alpha}}\mathbb{E}_{p}[p(X)^{-\alpha}A]\Big]\\
                       &=\mathbb{E}_{p}\Big[X^{(e)}\Big(A-\frac{1}{\sum_{y}p(y)^{1-\alpha}}\mathbb{E}_{p}[p(X)^{-\alpha}A]\Big)\Big].
\end{align*}
If we denote the expectation of $A$ with respect to the escort distribution $p_\alpha$ in \eqref{eqn:escort} by $\mathbb{E}_{\alpha}[A]$, then the above becomes
\begin{equation}
\label{eqn:tgt_vector_to_expectation}
X\big(\mathbb{E}[A]\big) = \mathbb{E}_{p}\big[X^{(e)}\big(A - \mathbb{E}_{\alpha}[A]\big)\big].
\end{equation}
Observe that
 \begin{align}
 \label{eqn:alpha_rep_criterion}
     \mathbb{E}_{p}\big\{&p(X)^{-\alpha}\big(A-\mathbb{E}_{\alpha}[A]\big)\big\}\nonumber\\
     &=\mathbb{E}_{p}\big\{p(X)^{-\alpha}A - p(X)^{-\alpha}\mathbb{E}_{\alpha}[A]\big\}\nonumber\\
     &=\mathbb{E}_{p}\big[p(X)^{-\alpha}A\big] -\mathbb{E}_{p}[p(X)^{-\alpha}]\cdot \mathbb{E}_{\alpha}[A]\nonumber\\
     &=\mathbb{E}_{p}\big[p(X)^{-\alpha}A\big]-\frac{\sum_{x}p(x)^{1-\alpha}}{\sum_{y}p(y)^{1-\alpha}}\mathbb{E}_{p}[p(X)^{-\alpha}A]\nonumber\\
     &=\mathbb{E}_{p}\big[p(X)^{-\alpha}A\big]-\mathbb{E}_{p}\big[p(X)^{-\alpha}A\big]\nonumber\\
     &=0.
  \end{align}   
Thus
\[
A -\mathbb{E}_{\alpha}[A] = Y^{(\alpha)} \in T_{p}^{(\alpha)}(\mathcal{P}),
\]
for some \( Y \in T_{p}(\mathcal{P}) \). Consequently, we can write 
\[
X\big(\mathbb{E}[A]\big) = \mathbb{E}_{p}\big[X^{(e)} Y^{(\alpha)}\big].
\]
Therefore,
\begin{align}
\label{eqn: metric expectation}  
\mathbb{E}_{p}\big[X^{(e)} Y^{(\alpha)}\big] = \big\langle\text{grad}\,\mathbb{E}[A], X\big\rangle_{p}.
\end{align}

Again, from the definition of the $\alpha$-Fisher information metric \eqref{eqn:alpha_riemannian_metric}, we see that 
\begin{equation}
\label{eqn:Ex-Innerproduct}
\mathbb{E}_{p}\big[X^{(e)} Y^{(\alpha)}\big] = \big\langle Y, X\big\rangle_{p}, \quad \text{for any } X, Y \in T_{p}(\mathcal{P}).
\end{equation}
From \eqref{eqn: metric expectation} and \eqref{eqn:Ex-Innerproduct}, we get $Y = \text{grad}\,\mathbb{E}[A] \in T_{p}(\mathcal{P}) $. Hence
\begin{align*}
    \|(d\mathbb{E}[A])_{p}\|_{p}^{2}&= \big\langle\text{grad}\,\mathbb{E}[A], \text{grad}\,\mathbb{E}[A]\big\rangle_{p}\\
     &= \big\langle Y, Y\big\rangle_{p} \\
     &= \mathbb{E}_{p}\big[Y^{(e)} Y^{(\alpha)}\big].
\end{align*}
Since \( A -\mathbb{E}_{\alpha}[A] = Y^{(\alpha)} \),
\begin{equation}
    \label{eqn:star1}
\|(d\mathbb{E}[A])_{p}\|_{p}^{2} = \mathbb{E}_{p}\big[Y^{(e)} \big(A -\mathbb{E}_{\alpha}[A]\big)\big].
\end{equation}
We have
\begin{align}
\label{eqn:star0}
  Y^{(e)}(x)&= p(x)^{-\alpha} Y^{(\alpha)}(x)\nonumber\\
  &= p(x)^{-\alpha} \big(A(x) -\mathbb{E}_{\alpha}[A]\big).
\end{align}
Using \eqref{eqn:star0} in \eqref{eqn:star1}, we get 
\begin{align}
\|(d\mathbb{E}[A])_{p}\|_{p}^{2}
&= \mathbb{E}_{p}\big[p(X)^{-\alpha} \big(A -\mathbb{E}_{\alpha}[A]\big)^2\big]\nonumber \\
&= \sum_{y} p(y)^{1-\alpha} \mathbb{E}_{\alpha} \big[\big(A -\mathbb{E}_{\alpha}[A]\big)^2\big]\nonumber \\& = \sum_{y} p(y)^{1-\alpha} V_{\alpha}[A].\nonumber
\end{align}
Thus
$V_{\alpha}[A] = \frac{1}{\sum_{y} p(y)^{1-\alpha}} \|(d\mathbb{E}[A])_{p}\|_{p}^{2}.
$
\end{proof}
\begin{corollary}
\label{corollarry 1}
 If $S$ is a submanifold of $\mathcal{P}$, then $$V_{\alpha}[A]\geq\frac{1}{\sum_{y}p(y)^{1-\alpha}}\|(d\mathbb{E}[A]|_{S})_{p}\|_{p}^{2},$$ with equality holding if and only if
$$A-\mathbb{E}_{\alpha}[A]\in T_{p}^{(\alpha)}(S).$$
\end{corollary}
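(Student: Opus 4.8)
The plan is to reduce the corollary to the equality already established in the Theorem, by exploiting the relationship between the gradient of a function on $\mathcal{P}$ and the gradient of its restriction to the submanifold $S$. First I would set $f \coloneqq \mathbb{E}[A]$ and recall from the Theorem that $V_{\alpha}[A] = \frac{1}{\sum_{y} p(y)^{1-\alpha}}\|(df)_{p}\|_{p}^{2}$, where the norm is taken on the full manifold $\mathcal{P}$. It therefore suffices to prove that $\|(d(f|_{S}))_{p}\|_{p}^{2} \le \|(df)_{p}\|_{p}^{2}$, with equality characterized as stated, and then combine this with the Theorem.

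The key step is to identify $(\mathrm{grad}(f|_{S}))_{p}$ as the orthogonal projection of $(\mathrm{grad}\,f)_{p}$ onto $T_{p}(S)$. Since $S$ carries the metric induced as an inner-product subspace of $T_{p}(\mathcal{P})$, for every $X \in T_{p}(S)$ we have both $(df)_{p}(X) = \langle (\mathrm{grad}\,f)_{p}, X\rangle_{p}$ and $(d(f|_{S}))_{p}(X) = \langle (\mathrm{grad}(f|_{S}))_{p}, X\rangle_{p}$; using $(d(f|_{S}))_{p}(X) = (df)_{p}(X)$ for $X \in T_{p}(S)$ shows that $(\mathrm{grad}(f|_{S}))_{p}$ and $(\mathrm{grad}\,f)_{p}$ agree in their inner product against every tangent vector of $S$, which is exactly the defining property of the orthogonal projection. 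Writing the orthogonal decomposition $(\mathrm{grad}\,f)_{p} = (\mathrm{grad}(f|_{S}))_{p} + N$ with $N \perp T_{p}(S)$ and applying the Pythagorean identity for the $\alpha$-Fisher metric yields $\|(df)_{p}\|_{p}^{2} = \|(d(f|_{S}))_{p}\|_{p}^{2} + \|N\|_{p}^{2} \ge \|(d(f|_{S}))_{p}\|_{p}^{2}$, which gives the desired inequality.

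For the equality condition, I would observe that equality holds precisely when $N = 0$, i.e. when $(\mathrm{grad}\,f)_{p} \in T_{p}(S)$. From the proof of the Theorem, $(\mathrm{grad}\,f)_{p} = Y$, where $Y \in T_{p}(\mathcal{P})$ is the tangent vector whose $\alpha$-representation is $Y^{(\alpha)} = A - \mathbb{E}_{\alpha}[A]$. Hence $(\mathrm{grad}\,f)_{p} \in T_{p}(S)$ is equivalent to $Y \in T_{p}(S)$, which in turn is equivalent to $A - \mathbb{E}_{\alpha}[A] = Y^{(\alpha)} \in T_{p}^{(\alpha)}(S)$, establishing the stated characterization.

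The main obstacle I anticipate is making rigorous the passage from restriction of a function to orthogonal projection of its gradient: this requires that the metric $g^{(\alpha)}$ on $S$ be genuinely the restriction of $g^{(\alpha)}$ on $\mathcal{P}$ and that $T_{p}(S) \subseteq T_{p}(\mathcal{P})$ as inner-product subspaces, so that the $\alpha$-representation $Y^{(\alpha)}$ of a vector $Y \in T_{p}(S)$ computed in $S$ coincides with the one computed in $\mathcal{P}$. Once this identification is in place, the remainder is a direct application of the orthogonal decomposition together with the Theorem.
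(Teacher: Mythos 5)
Your proposal is correct and takes essentially the same approach as the paper: the paper's proof consists precisely of the observation that $(\mathrm{grad}\,(f|_{S}))_{p}$ is the orthogonal projection of $(\mathrm{grad}\,f)_{p}$ onto $T_{p}(S)$, applied to the preceding Theorem. Your write-up simply fills in the details the paper leaves implicit, namely the agreement of the differentials on $T_{p}(S)$, the Pythagorean identity, and the identification $Y^{(\alpha)} = A - \mathbb{E}_{\alpha}[A]$ for the equality case.
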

\begin{proof} For every real smooth function $f$, $(\text{grad}f|_{S})_p$ is the orthogonal projection of $(\text{grad}f)_p$ onto $T_p(S)$. Utilizing this fact in the previous theorem completes the proof.
\end{proof}
\begin{corollary}
\label{corollarry 2}
    For $B:\mathcal{X}^n\to \mathbb{R}$, let $\mathbb{E}[B]$ denote the mapping $p\mapsto \mathbb{E}_{p}[B]$ on the submanifold $S_n$ of $\mathcal{P}_n\coloneqq\{p:\mathcal{X}^n\to \mathbb{R}\big|\,p(\textbf{x})>0 \text{ for all }\textbf{x}\in \mathcal{X}^n, \, \sum_{\textbf{x}}p(\textbf{x}) = 1\}$. Then $$V_{\alpha}[B]\geq\frac{1}{\sum_{\textbf{y}\in\mathcal{X}^n}p(\textbf{y})^{1-\alpha}}\|(d\mathbb{E}[B]|_{S_n})_{p}\|_{p}^{2},$$ where the variance $V_{\alpha}[\cdot]$ is with respect to the escort distribution
    \[
    p_{\alpha}(\textbf{x})\coloneq \frac{p(\textbf{x})^{1-\alpha}}{\sum_{\textbf{y}}p(\textbf{y})^{1-\alpha}},\quad \textbf{x}\in \mathcal{X}^n,
    \]
    with equality if and only if
\[
B-\mathbb{E}_{\alpha}[B]\in T_{p}^{(\alpha)}(S_n).
\]
\end{corollary}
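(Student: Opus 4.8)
The plan is to recognize that Corollary \ref{corollarry 2} is nothing more than the specialization of Corollary \ref{corollarry 1} to a larger finite alphabet. Since $\mathcal{X}$ is finite, the product set $\mathcal{X}^n$ is again finite, and $\mathcal{P}_n$ is precisely the probability simplex $\mathcal{P}(\mathcal{X}^n)$ over it. Consequently, the entire development of Section \ref{sec:alpha-fisher-information}---the $\alpha$-Fisher information metric \eqref{eqn:alpha-Fisher-information}, the $m$-, $e$-, and $\alpha$-representations of tangent vectors, the Proposition characterizing $T_p^{(\alpha)}$, the Theorem, and Corollary \ref{corollarry 1}---transfers verbatim once the base set $\mathcal{X}$ is relabelled as $\mathcal{X}^n$, every sum $\sum_{x\in\mathcal{X}}$ is replaced by $\sum_{\textbf{x}\in\mathcal{X}^n}$, and the escort distribution is taken to be the one displayed in the statement.

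First I would make this substitution explicit: set the ambient manifold to $\mathcal{P}_n$, the submanifold to $S_n$, and the random variable to $B:\mathcal{X}^n\to\mathbb{R}$ in place of $A:\mathcal{X}\to\mathbb{R}$. Under this identification the escort distribution $p_\alpha(\textbf{x})=p(\textbf{x})^{1-\alpha}/\sum_{\textbf{y}}p(\textbf{y})^{1-\alpha}$ on $\mathcal{X}^n$ plays exactly the role that $p_\alpha$ in \eqref{eqn:escort} played on $\mathcal{X}$, so that $V_\alpha[B]=\mathbb{E}_{\alpha}[(B-\mathbb{E}_\alpha[B])^2]$ is the corresponding escort variance. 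With these replacements, the conclusion of Corollary \ref{corollarry 1} reads precisely as the inequality
\[
V_{\alpha}[B]\geq\frac{1}{\sum_{\textbf{y}\in\mathcal{X}^n}p(\textbf{y})^{1-\alpha}}\|(d\mathbb{E}[B]|_{S_n})_{p}\|_{p}^{2},
\]
together with the equality condition $B-\mathbb{E}_{\alpha}[B]\in T_{p}^{(\alpha)}(S_n)$, which is exactly what is to be shown.

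The single point I would pause to confirm is that the finiteness of $\mathcal{X}$ is what legitimizes the transfer: it guarantees that $\mathcal{P}_n$ is a genuine finite-dimensional statistical manifold of dimension $|\mathcal{X}|^n-1$, so that the metric components remain finite and smooth, the correspondence $X\mapsto\omega_X$ between $T_p(S_n)$ and its dual is a linear isomorphism, and the projection identity ``$(\mathrm{grad}\,f|_{S_n})_p$ is the orthogonal projection of $(\mathrm{grad}\,f)_p$ onto $T_p(S_n)$'' used in Corollary \ref{corollarry 1} continues to hold. Beyond verifying this, there is no genuine obstacle: the result is a direct reduction to the already-proved Corollary \ref{corollarry 1}, and its role is simply to set up the $n$-sample framework needed for the generalized Cram\'er--Rao bound of Section \ref{sec:alpha-CRLB}.
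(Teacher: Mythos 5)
Your proposal is correct and matches the paper's own proof, which simply states that Corollary \ref{corollarry 2} ``follows from Corollary \ref{corollarry 1}''---the intended argument being exactly your relabelling of the finite alphabet $\mathcal{X}$ as $\mathcal{X}^n$ so that $\mathcal{P}_n$ is the simplex over $\mathcal{X}^n$ and all of Section \ref{sec:alpha-fisher-information} transfers verbatim. Your explicit check that finiteness of $\mathcal{X}^n$ preserves the manifold structure, the isomorphism $X\mapsto\omega_X$, and the orthogonal-projection step is a useful elaboration of what the paper leaves implicit.
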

\begin{proof}
     Follows from \textit{Corollary} \ref{corollarry 1}.
\end{proof}
\section{A Generalized Cram\'er–Rao bound}
\label{sec:alpha-CRLB}
We now state our main result: a generalized Cram\'er–Rao bound for a finite sample.
\begin{theorem}
    Let $X_1, X_2, \dots , X_n$ be an i.i.d. sample generated from a mixture distribution of the form \eqref{eqn:mixture distribution}. Let $\hat{\theta}$ be an unbiased estimator of $\boldsymbol{\theta}$ with respect to $p_{\boldsymbol{\theta}}$.
Then we have
\begin{equation}
\label{eqn:alpha-CRLB}
    V_{\alpha,\boldsymbol{\theta}}[\hat{\theta}]\geq \frac{1}{\sum_{\textbf{y}}p_{\boldsymbol{\theta}}(\textbf{y})^{1-\alpha}}[G_n^{(\alpha)}(\boldsymbol{\theta})]^{-1},
\end{equation}
where $G_n^{(\alpha)}(\boldsymbol{\theta}) = [g^{(\alpha)}_{ij}(\boldsymbol{\theta})]$ with $$g^{(\alpha)}_{ij}(\boldsymbol{\theta}) = \mathbb{E}_{\boldsymbol\theta}\big[ p_{\boldsymbol{\theta}}(\textbf{X})^{\alpha}\partial_i\log p_{\boldsymbol{\theta}}(\textbf{X})\partial_j\log p_{\boldsymbol{\theta}}(\textbf{X})\big]$$ and $V_{\alpha,\boldsymbol{\theta}}[\hat{\theta}]=[\text{Cov}_{\alpha,\boldsymbol{\theta}}\big(\hat{\theta}_i(\textbf{X}),\hat{\theta}_j(\textbf{X}))\big]$ is the covariance matrix and the covariance is with respect to the escort distribution $$p_{\alpha,{\boldsymbol{\theta}}}{(\textbf{x})}\coloneqq\frac{p_{{\boldsymbol{\theta}}}{(\textbf{x})}^{1-\alpha}}{\sum_{\textbf{y}}p_{{\boldsymbol{\theta}}}{(\textbf{y})}^{1-\alpha}},\quad \textbf{x}\in\mathcal{X}^n.$$
(Note: For two matrices $A_1$ and $A_2$, we write $A_1\geq A_2$ to indicate that the matrix $A_1-A_2$ is positive semi-definite.)
\end{theorem}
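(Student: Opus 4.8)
The plan is to obtain the matrix inequality \eqref{eqn:alpha-CRLB} by testing the scalar bound of \textit{Corollary} \ref{corollarry 2} against every direction $c\in\mathbb{R}^k$. Concretely, I would fix an arbitrary constant vector $c=(c_1,\dots,c_k)^{\top}\in\mathbb{R}^k$ and apply \textit{Corollary} \ref{corollarry 2} to the scalar function $B\coloneqq c^{\top}\hat{\theta}=\sum_{i=1}^{k}c_i\hat{\theta}_i:\mathcal{X}^n\to\mathbb{R}$, viewing $S_n=\{p_{\boldsymbol{\theta}}^{\otimes n}\}$ as the submanifold of $\mathcal{P}_n$ coordinatised by $\boldsymbol{\theta}$. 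This reduces the positive-semidefiniteness claim to the quadratic-form inequality $c^{\top}V_{\alpha,\boldsymbol{\theta}}[\hat{\theta}]\,c\ge \big(\sum_{\textbf{y}}p_{\boldsymbol{\theta}}(\textbf{y})^{1-\alpha}\big)^{-1}c^{\top}[G_n^{(\alpha)}(\boldsymbol{\theta})]^{-1}c$ holding for every $c$.

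I would then identify each side. For the left-hand side, linearity of the escort covariance gives $V_{\alpha}[B]=V_{\alpha}\big[\sum_i c_i\hat{\theta}_i\big]=\sum_{i,j}c_ic_j\,\mathrm{Cov}_{\alpha,\boldsymbol{\theta}}(\hat{\theta}_i,\hat{\theta}_j)=c^{\top}V_{\alpha,\boldsymbol{\theta}}[\hat{\theta}]\,c$, since the escort $p_{\alpha,\boldsymbol{\theta}}$ depends only on $p_{\boldsymbol{\theta}}^{\otimes n}$ and not on the function being averaged. For the right-hand side, unbiasedness with respect to $p_{\boldsymbol{\theta}}$ is the crucial input: $\mathbb{E}_{p_{\boldsymbol{\theta}}^{\otimes n}}[\hat{\theta}_i]=\theta_i$, so the map $\mathbb{E}[B]$ restricted to $S_n$ is the linear coordinate function $\boldsymbol{\theta}\mapsto c^{\top}\boldsymbol{\theta}=\sum_i c_i\theta_i$, whence $\partial_i\mathbb{E}[B]|_{S_n}=c_i$. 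Substituting into the gradient-norm formula \eqref{eqn:differential 2-form} yields $\|(d\mathbb{E}[B]|_{S_n})_p\|_p^2=\sum_{i,j}(g^{ij}(\boldsymbol{\theta}))^{(\alpha)}c_ic_j=c^{\top}[G_n^{(\alpha)}(\boldsymbol{\theta})]^{-1}c$, where $G_n^{(\alpha)}(\boldsymbol{\theta})$ is exactly the $\alpha$-Fisher information matrix \eqref{eqn:alpha-Fisher-information} of the product model $S_n$ and $[(g^{ij})^{(\alpha)}]$ is its inverse. Combining both identifications with \textit{Corollary} \ref{corollarry 2} gives the scalar inequality, and since $c$ is arbitrary the matrix $V_{\alpha,\boldsymbol{\theta}}[\hat{\theta}]-\big(\sum_{\textbf{y}}p_{\boldsymbol{\theta}}(\textbf{y})^{1-\alpha}\big)^{-1}[G_n^{(\alpha)}(\boldsymbol{\theta})]^{-1}$ is positive semidefinite, which is precisely \eqref{eqn:alpha-CRLB}.

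I expect the main obstacle to be the careful handling of the geometry on the $n$-fold model $S_n$: one must confirm that $G_n^{(\alpha)}(\boldsymbol{\theta})$ as defined in the statement really is the Gram matrix of $\{\partial_i\}$ under $g^{(\alpha)}$ on $S_n$, so that its inverse supplies the coefficients $(g^{ij})^{(\alpha)}$ appearing in the gradient formula, and that this matrix is positive definite so the inverse exists. A secondary point of care is that the restriction to $S_n$ in \textit{Corollary} \ref{corollarry 2} replaces $(\mathrm{grad}\,\mathbb{E}[B])_p$ by its orthogonal projection onto $T_p(S_n)$, so $d\mathbb{E}[B]$ must be evaluated as a one-form on $S_n$ in the coordinates $\boldsymbol{\theta}$; the contaminated sampling law $p_{\epsilon,\boldsymbol{\theta}}$ of \eqref{eqn:mixture distribution} plays no role in the estimate itself, since both the unbiasedness constraint and the escort variance are anchored at $p_{\boldsymbol{\theta}}$, and enters only through the robustness interpretation discussed afterward.
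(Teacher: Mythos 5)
Your proposal is correct and follows essentially the same route as the paper's own proof: both apply \textit{Corollary}~\ref{corollarry 2} to the scalar $B=c^{\top}\hat{\theta}$ on $S_n$, use unbiasedness together with the gradient-norm formula \eqref{eqn:differential 2-form} to identify $\|(d\mathbb{E}[B]|_{S_n})_p\|_p^2$ with $c^{\top}[G_n^{(\alpha)}(\boldsymbol{\theta})]^{-1}c$, and let $c$ range over $\mathbb{R}^k$ to conclude positive semidefiniteness. In fact, your write-up makes explicit two steps the paper leaves implicit, namely the identification $V_{\alpha}[B]=c^{\top}V_{\alpha,\boldsymbol{\theta}}[\hat{\theta}]\,c$ and the role of unbiasedness in reducing $\mathbb{E}[B]|_{S_n}$ to the linear function $\boldsymbol{\theta}\mapsto c^{\top}\boldsymbol{\theta}$.
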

\begin{proof} 
Let $B = c^\top \hat{\theta}, $ where $c\in\mathbb{R}^k$. From \eqref{eqn:differential 2-form}, we obtain
\[
\|(d\mathbb{E}[B]_p)\|_p^2=c^\top [G_n^{(\alpha)}(\boldsymbol{\theta})]^{-1}c.
\]
Applying \textit{Corollary} \ref{corollarry 2} to $S_n$, we obtain
\begin{equation*}
    c^\top V_{\alpha,{\boldsymbol{\theta}}}[\hat{\theta}]c\geq c^\top \frac{1}{\sum_{\textbf{y}}p_{\boldsymbol{\theta}}(\textbf{y})^{1-\alpha}}[G_n^{(\alpha)}(\boldsymbol{\theta})]^{-1}c.
\end{equation*}
This proves the theorem.
\end{proof}
\begin{remark}
    For \( \alpha = 0 \), \( G_n^{(\alpha)}(\boldsymbol{\theta}) \) reduces to the Fisher information matrix \( G_n(\boldsymbol{\theta})=n G(\boldsymbol{\theta})\), and in this case, bound \eqref{eqn:alpha-CRLB} matches the classical CRLB.
\end{remark}
We know that the sample mean is an unbiased estimator that achieves the classical CRLB. The example below shows that the sample mean satisfies the generalized CRLB \eqref{eqn:alpha-CRLB} with strict inequality even in the `no contamination' case ($\epsilon=0$).
\begin{example}
   Let \( X_1,X_2, \dots , X_n\), $n>1$ be an i.i.d. sample from a Bernoulli distribution with parameter \( \theta \in (0,1) \), where \( p_{\theta}(x) = \theta^x (1-\theta)^{1-x},\,x\in\{0,1\}.\) Let \( \hat{\theta} =\overline{X}= \frac{1}{n} \sum_{i=1}^n X_i \) with $\mathbb{E}_{\theta}[\overline{X}]=\theta$. One can show that
\[
V_{\alpha,\theta}[\overline{X}] > \frac{1}{\sum_{\mathbf{y}} p_{\theta}(\mathbf{y})^{1-\alpha}} [G_n^{(\alpha)}(\theta)]^{-1}, \text{ for } \alpha \neq 0.
\]
Note that for \( \alpha = 0 \), the above strict inequality reduces to equality, which corresponds to the classical CRLB.
\end{example}

We now highlight a notable result: under the true model, the covariance of the estimator that attains the generalized CRLB \eqref{eqn:alpha-CRLB} achieves the asymptotic covariance of the BHHJ estimator.
\begin{proposition}
\label{prop:Basu_assym_variance}
    If equality holds in \eqref{eqn:alpha-CRLB}, then
\begin{align}
\label{eqn:equality}
    V_{\boldsymbol{\theta}}[\hat{\theta}] = [{I^{(\alpha)}_n}(\boldsymbol{\theta})]^{-1},
\end{align} where 
\[
I^{(\alpha)}_n(\boldsymbol{\theta}) = G_n^{(\alpha)}(\boldsymbol{\theta})[K_n^{(\alpha)}(\boldsymbol{\theta})]^{-1} G_{n}^{(\alpha)}(\boldsymbol{\theta})
\]
is the inverse of the asymptotic covariance matrix of the BHHJ estimator \cite{basu2011statistical}, \cite{BasuHarris1998}, with
\begin{align*}
    &G_n^{(\alpha)}(\boldsymbol{\theta})\coloneqq\mathbb{E}_{\boldsymbol{\theta}}[p_{\boldsymbol{\theta}}(\textbf{X})^{\alpha}\nabla_{\boldsymbol{\theta}} \log p_{\boldsymbol{\theta}}(\textbf{X})\big(\nabla_{\boldsymbol{\theta}} \log p_{\boldsymbol{\theta}}(\textbf{X})\big)^\top],\\
    &K_n^{(\alpha)}(\boldsymbol{\theta})\coloneqq V_{\boldsymbol{\theta}}\big[p_{\boldsymbol{\theta}}(\textbf{X})^{\alpha}\nabla_{\boldsymbol{\theta}}\log p_{\boldsymbol{\theta}}(\textbf{X})\big].
\end{align*}
\end{proposition}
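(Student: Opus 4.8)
The plan is to turn the equality hypothesis into an explicit formula for the centered estimator as an element of the $\alpha$-tangent space, to identify the unknown coefficient matrix from unbiasedness, and then to pass from the escort-variance identity to the ordinary $p_{\boldsymbol\theta}$-covariance that appears in \eqref{eqn:equality}.

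First I would apply the equality case of \textit{Corollary}~\ref{corollarry 2} on $S_n\subset\mathcal{P}_n$: equality in \eqref{eqn:alpha-CRLB} for $B=c^\top\hat{\theta}$ with arbitrary $c\in\mathbb{R}^k$ means $B-\mathbb{E}_\alpha[B]\in T_p^{(\alpha)}(S_n)$. Since $T_p^{(\alpha)}(S_n)$ is spanned by the $\alpha$-representations $(\partial_j)^{(\alpha)}=p_{\boldsymbol\theta}^{\alpha}\,\partial_j\log p_{\boldsymbol\theta}$ and $c$ is arbitrary, this yields componentwise
\[
\hat{\theta}_i-\mathbb{E}_\alpha[\hat{\theta}_i]=\sum_j M_{ij}\,p_{\boldsymbol\theta}(\mathbf{X})^{\alpha}\,\partial_j\log p_{\boldsymbol\theta}(\mathbf{X})
\]
for some matrix $M=M(\boldsymbol\theta)$ depending only on $\boldsymbol\theta$.

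The crux is to determine $M$. Here I would reuse the differential identity \eqref{eqn:tgt_vector_to_expectation}, namely $X(\mathbb{E}[A])=\mathbb{E}_p[X^{(e)}(A-\mathbb{E}_\alpha[A])]$, with $A=\hat{\theta}_i$ and $X=\partial_l$ so that $X^{(e)}=\partial_l\log p_{\boldsymbol\theta}$. Unbiasedness of $\hat{\theta}$ with respect to $p_{\boldsymbol\theta}$ makes the left-hand side $\partial_l\theta_i=\delta_{li}$; substituting the representation above and recognizing $\mathbb{E}_{\boldsymbol\theta}[p_{\boldsymbol\theta}^{\alpha}\,\partial_l\log p_{\boldsymbol\theta}\,\partial_j\log p_{\boldsymbol\theta}]=[G_n^{(\alpha)}]_{lj}$ forces $M\,G_n^{(\alpha)}=I$, i.e. $M=[G_n^{(\alpha)}]^{-1}$. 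I expect this to be the main obstacle, because it is the step where one must verify that the identity proven in the first theorem transfers verbatim to the sampled pair $(\mathcal{P}_n,S_n)$ and, crucially, that the two expectations in play are kept straight: the escort $\mathbb{E}_\alpha$ in the centering versus the true $\mathbb{E}_{\boldsymbol\theta}$ defining both $g_{ij}^{(\alpha)}$ and the unbiasedness.

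Finally I would compute the ordinary covariance. Writing $\hat{\theta}_i-\theta_i=(\hat{\theta}_i-\mathbb{E}_\alpha[\hat{\theta}_i])+(\mathbb{E}_\alpha[\hat{\theta}_i]-\theta_i)$, the second summand is a deterministic constant and drops out of any covariance, so with $M=[G_n^{(\alpha)}]^{-1}$ we obtain
\[
V_{\boldsymbol\theta}[\hat{\theta}]=\operatorname{Cov}_{\boldsymbol\theta}\big([G_n^{(\alpha)}]^{-1}p_{\boldsymbol\theta}^{\alpha}\nabla_{\boldsymbol\theta}\log p_{\boldsymbol\theta}\big)=[G_n^{(\alpha)}]^{-1}K_n^{(\alpha)}(\boldsymbol\theta)[G_n^{(\alpha)}]^{-1},
\]
using the symmetry of $G_n^{(\alpha)}$ and the definition of $K_n^{(\alpha)}$. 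Since $[I_n^{(\alpha)}]^{-1}=[G_n^{(\alpha)}]^{-1}K_n^{(\alpha)}[G_n^{(\alpha)}]^{-1}$, this is precisely \eqref{eqn:equality}, completing the argument.
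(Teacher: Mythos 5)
Your proposal is correct, and it reaches \eqref{eqn:equality} by the same overall skeleton as the paper --- equality in Corollary~\ref{corollarry 2} gives $\hat{\theta}-\mathbb{E}_{\alpha,\boldsymbol{\theta}}[\hat{\theta}] = M(\boldsymbol{\theta})\,p_{\boldsymbol{\theta}}(\mathbf{x})^{\alpha}\nabla_{\boldsymbol{\theta}}\log p_{\boldsymbol{\theta}}(\mathbf{x})$, then one identifies $M$ and takes the $p_{\boldsymbol{\theta}}$-covariance --- but you identify $M$ by a genuinely different and, in fact, tighter argument. The paper substitutes the representation back into the equality version of \eqref{eqn:alpha-CRLB}, obtaining $m(\boldsymbol{\theta})\,G_n^{(\alpha)}(\boldsymbol{\theta})\,m(\boldsymbol{\theta})^{\top}=[G_n^{(\alpha)}(\boldsymbol{\theta})]^{-1}$, and from this concludes $m(\boldsymbol{\theta})=[G_n^{(\alpha)}(\boldsymbol{\theta})]^{-1}$; strictly speaking that quadratic matrix equation alone does not pin $m$ down uniquely (any $m=G^{-1/2}QG^{-1/2}$ with $Q$ orthogonal also solves it), so the paper's identification implicitly needs extra input. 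You instead invoke unbiasedness together with the differential identity \eqref{eqn:tgt_vector_to_expectation}, applied with $X=\partial_l$ and $A=\hat{\theta}_i$, to get the linear equation $M\,G_n^{(\alpha)}(\boldsymbol{\theta})=I$, which determines $M=[G_n^{(\alpha)}(\boldsymbol{\theta})]^{-1}$ uniquely; this uses the hypothesis of unbiasedness (available from the statement of the generalized CRLB) exactly where it is needed, and closes the uniqueness gap. Your final step --- noting that $\mathbb{E}_{\alpha,\boldsymbol{\theta}}[\hat{\theta}]-\boldsymbol{\theta}$ is deterministic so it drops out of the covariance, then using symmetry of $G_n^{(\alpha)}$ and the definition of $K_n^{(\alpha)}$ to land on $[G_n^{(\alpha)}]^{-1}K_n^{(\alpha)}[G_n^{(\alpha)}]^{-1}=[I_n^{(\alpha)}(\boldsymbol{\theta})]^{-1}$ --- coincides with the paper's. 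In short: same decomposition, same conclusion, but your identification of the coefficient matrix via the linear (unbiasedness) constraint is more robust than the paper's identification via the quadratic (variance-equality) constraint.
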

\begin{proof}
    Assume that the equality holds in \eqref{eqn:alpha-CRLB}. From \textit{Corollary} \ref{corollarry 2}, we have
$$\hat{\theta}-\mathbb{E}_{\alpha,\boldsymbol{\theta}}[\hat{\theta}]\in T_{p}^{(\alpha)}(S_n).$$
That is,
\begin{align}
    \label{eqn:star2}
        \hat{\theta}-\mathbb{E}_{\alpha,\boldsymbol{\theta}}[\hat{\theta}] = m(\boldsymbol{\theta})p_{\boldsymbol{\theta}}(\textbf{x})^{\alpha}\nabla_{\boldsymbol{\theta}}\log p_{\boldsymbol{\theta}}(\textbf{x})
\end{align}
for some $k\times k$ matrix $m(\boldsymbol{\theta})$.
Now we will find $m(\boldsymbol{\theta})$. To do this, observe that \eqref{eqn:alpha-CRLB} with equality can be written as 
\begin{align*}
    \sum_{\textbf{y}}p_{\boldsymbol\theta}(\textbf{y})^{1-\alpha}\mathbb{E}_{\alpha,\boldsymbol{\theta}}\big[\big(\hat{\theta}-\mathbb{E}_{\alpha,\boldsymbol{\theta}}[\hat{\theta}]\big)&\big(\hat{\theta}-\mathbb{E}_{\alpha,\boldsymbol{\theta}}[\hat{\theta}]\big)^{T}\big]\\
    & = [G_n^{(\alpha)}(\boldsymbol{\theta})]^{-1}.
\end{align*}
That is,
\begin{align}
\label{eqn:star3}
\mathbb{E}_{\boldsymbol{\theta}}\big[
p_{\boldsymbol{\theta}}(\mathbf{X})^{-\alpha}
\big(\hat{\theta} - \mathbb{E}_{\alpha, \boldsymbol{\theta}}[\hat{\theta}]\big)
\big(\hat{\theta} - \mathbb{E}_{\alpha, \boldsymbol{\theta}}[\hat{\theta}]\big)^\top
\big] 
= [G_n^{(\alpha)}(\boldsymbol{\theta})]^{-1}.
\end{align}
Substituting \eqref{eqn:star2} in \eqref{eqn:star3} yields
\begin{align*}
m(\boldsymbol{\theta})\mathbb{E}_{\boldsymbol{\theta}}\big[p_{\boldsymbol{\theta}}(\textbf{X})^{\alpha}\nabla_{\boldsymbol{\theta}}\log p_{\boldsymbol{\theta}}(\textbf{X})(\nabla_{\boldsymbol{\theta}}&\log \,p_{\boldsymbol{\theta}}(\textbf{X}))^\top]m(\boldsymbol{\theta})^\top\\
&=[G_n^{(\alpha)}(\boldsymbol{\theta})]^{-1}.
\end{align*}
This implies that $m(\boldsymbol{\theta}) = [G_n^{(\alpha)}(\boldsymbol{\theta})]^{-1}.$
Hence from \eqref{eqn:star2}, we have
    \begin{align}
        \label{eqn:star5}
  \hat{\theta}-\mathbb{E}_{\alpha,\boldsymbol{\theta}}[\hat{\theta}]=[G_n^{(\alpha)}(\boldsymbol{\theta})]^{-1} p_{\boldsymbol{\theta}}
(\textbf{x})^{\alpha}\nabla_{\boldsymbol{\theta}}\log p_{\boldsymbol{\theta}}(\textbf{x}).
      \end{align}
Taking covariance on both sides of \eqref{eqn:star5}, we get
     \begin{align*}
        &V_{\boldsymbol{\theta}}[\hat{\theta}]\\
        &=[G_n^{(\alpha)}(\boldsymbol{\theta})]^{-1}  V_{\boldsymbol{\theta}}[p_{\boldsymbol{\theta}}
(\textbf{X})^{\alpha}\nabla_{\boldsymbol{\theta}}\log p_{\boldsymbol{\theta}}(\textbf{X})][(G_n^{(\alpha)}(\boldsymbol{\theta}))^{-1}]^\top\\
        &=[G_n^{(\alpha)}(\boldsymbol{\theta})]^{-1}  V_{\boldsymbol{\theta}}[p_{\boldsymbol{\theta}}
(\textbf{X})^{\alpha}\nabla_{\boldsymbol{\theta}}\log p_{\boldsymbol{\theta}}(\textbf{X})][G_n^{(\alpha)}(\boldsymbol{\theta})]^{-1},
    \end{align*}
    since $G_n^{(\alpha)}(\boldsymbol{\theta})$ is symmetric.
    Thus
    \begin{align*}
        V_{\boldsymbol{\theta}}[\hat{\theta}] &= [G_n^{(\alpha)}(\boldsymbol{\theta})]^{-1} K_n^{(\alpha)}(\boldsymbol{\theta})[G_n^{(\alpha)}(\boldsymbol{\theta})]^{-1}\\
        &=[{I^{(\alpha)}_n}(\boldsymbol{\theta})]^{-1}.
    \end{align*}
\end{proof}
\section{Summary and Concluding Remarks}
\label{sec:concluding-remarks}
 In this paper, we established a generalized Cram\'er-Rao bound. We first applied Eguchi's theory to the BHHJ divergence and established the $\alpha$-Fisher information metric. This serves as the Riemannian metric on a statistical manifold. We then used the Amari-Nagaoka theory to derive the generalized Cram\'er-Rao bound \eqref{eqn:alpha-CRLB}. The classical CRLB sets a benchmark for the variance of estimators within the maximum likelihood estimator (MLE) framework. If an unbiased estimator satisfies the classical CRLB, it is considered as ``best'' as its variance matches the asymptotic variance of the MLE \cite{Lehman1983}. However, in the presence of contamination, the MLE often exhibits significant bias from the true parameters. To address this issue, various robust estimators have been developed in the literature \cite{simpson1987minimum,windham1995robustifying,BasuHarris1998,jones2001comparison}. In \cite{BasuHarris1998}, Basu et al. proposed a robust procedure to estimate the parameters of a statistical model, called the BHHJ estimator. The BHHJ estimator depends on a parameter $\alpha$. This $\alpha$ offers a balance between robustness and efficiency. They also proved an asymptotic normality result establishing that $[I_n^{(\alpha)}(\theta)]^{-1}$ is the asymptotic variance. In the robust setting, our proposed bound \eqref{eqn:alpha-CRLB} serves as a benchmark for the variance of estimators within the BHHJ estimator framework. In particular, an estimator that is unbiased with respect to the true model and satisfies bound \eqref{eqn:alpha-CRLB} is considered as ``best'' as its variance under the true model matches the asymptotic variance of the BHHJ estimator (\textit{Proposition} \ref{prop:Basu_assym_variance}). Although we restricted to the case $\alpha>0$ to ensure robustness, it is worth noting that the theoretical framework developed in this paper remains valid even when $\alpha < 0$ (except for $\alpha = -1$). $B_{\alpha}$ still qualifies as a Bregman divergence derived from the same generating function described in \textit{Remark} \ref{rem:Bregman}. Thus, while the focus on positive $\alpha$ is motivated by practical considerations, the underlying mathematical structure applies to other $\alpha$'s as well (except $\alpha = 0$ and $\alpha = -1$). The main technical novelty of the paper lies in our ability to write $X(\mathbb{E}[A])$ as in \eqref{eqn:tgt_vector_to_expectation} with $A-\mathbb{E}_{\alpha}[A]$ having the property of an $\alpha$-representation of a tangent vector \eqref{eqn:alpha_rep_criterion}. This also complies with the Riemannian metric \eqref{eqn:alpha_riemannian_metric}. That is, the Riemannian metric is the expected value of the product of the $e$-representation and $\alpha$-representation of two tangent vectors. This framework can be extended to a general Bregman divegence with ${1}/{[p(X)\varphi''(p(X))]}$ replacing $p(X)^{-\alpha}$ in \eqref{eqn:alpha_rep_criterion}. The Riemannian metric in this case will be the same as \eqref{eqn:alpha-Fisher-information} with $p(X)\varphi''(p(X))$ in place of $p(X)^{\alpha}$. This, and the applicability of this framework to other divergence functions, will be the focus of the authors' forthcoming work. In the case of classical CRLB (KL-divergence), it was just $A-\mathbb{E}_{p}[A]$ which can be written as an $e$-representation of a tangent vector and the Riemannian metric is the expected value of the product of the $e$-representations of two tangent vectors. It should also be noted that our bound~\eqref{eqn:alpha-CRLB} currently applies to models with finite support. As part of future work, we aim to extend this result to models with continuous support which will increase the applicability of the bound \eqref{eqn:alpha-CRLB}.
\section{Acknowledgements}
We thank the reviewers for their valuable comments that helped improve the manuscript. Satyajit Dhadumia is supported by a Council of Scientific \& Industrial Research (CSIR) fellowship of the Department of Scientific \& Industrial Research, Ministry of Science and Technology, Government of India. The work was also supported in part by the Science \& Engineering Research Board (SERB) of the Department of Science and Technology, India, MATRICS grant (No. MTR/2022/000922).

   \bibliographystyle{plain}
    \bibliography{references}

\end{document}